\newtheorem{theorem}{Theorem}[section]
\newtheorem{proposition}[theorem]{Proposition}
\newtheorem{lemma}[theorem]{Lemma}
\newtheorem{remark}[theorem]{Remark}
\numberwithin{equation}{section}
\begin{document}

\author{David Windisch}\title{\textbf{Logarithmic components of the vacant set for random walk on a discrete torus}}
\author{\normalsize David Windisch \\
\normalsize Departement Mathematik\\
\normalsize ETH Z\"urich\\
\normalsize CH-8092 Z\"urich\\
\normalsize Switzerland \\
\normalsize \texttt{david.windisch@math.ethz.ch}}
\date{}
\maketitle

\thispagestyle{empty}

\begin{abstract}
This work continues the investigation, initiated in a recent work by Benjamini and Sznitman, of percolative properties of the set of points not visited by a random walk on the discrete torus $({\mathbb Z}/N{\mathbb Z})^d$ up to time $uN^d$ in high dimension $d$. If $u>0$ is chosen sufficiently small it has been shown that with overwhelming probability this vacant set contains a unique giant component containing segments of length $c_0 \log N$ for some constant $c_0 > 0$, and this component occupies a non-degenerate fraction of the total volume as $N$ tends to infinity. Within the same setup, we investigate here the complement of the giant component in the vacant set and show that some components consist of segments of logarithmic size. In particular, this shows that the choice of a sufficiently large constant $c_0>0$ is crucial in the definition of the giant component.
\end{abstract}

\textbf{Key words:} Giant component, vacant set, random walk, discrete torus.

\vspace{5pt}
\textbf{AMS 2000 Subject Classification:} 60K35, 60G50, 82C41, 05C80.

\vspace{5pt}
Submitted to EJP on November 15, 2007, final version accepted April 15, 2008.

\newpage

\setcounter{page}{1}

\section{Introduction}

In a recent work by Benjamini and Sznitman \cite{BS07}, the authors consider a simple random walk on the $d$-dimensional integer torus $E=({\mathbb Z}/N{\mathbb Z})^d$ for a sufficiently large dimension $d$ and investigate properties of the set of points in the torus not visited by the walk after $[uN^d]$ steps for a sufficiently small parameter $u>0$ and large $N$. Among other properties of this so-called vacant set, the authors of \cite{BS07} find that for a suitably defined dimension-dependent constant $c_0 > 0$, there is a unique component of the vacant set containing segments of length at least $[c_0 \log N]$ with probability tending to $1$ as $N$ tends to infinity, provided $u>0$ is chosen small enough. This component is referred to as the giant component. It is shown in \cite{BS07} that with overwhelming probability, the giant component is at $|.|_\infty$-distance of at most $N^\beta$ from any point and occupies at least a constant fraction $\gamma$ of the total volume of the torus for arbitrary $\beta, \gamma \in (0,1)$, when $u>0$ is chosen sufficiently small. One of the many natural questions that arise from the study of the giant component is whether there exist also other components in the vacant set containing segments of logarithmic size. In this work, we give an affirmative answer to this question. In particular, we show that for small $u>0$, there exists some component consisting of a single segment of length $[c_1 \log N]$ for a dimension-dependent constant $c_1 >0$ with probability tending to $1$ as $N$ tends to infinity. 

\vspace{12pt}
In order to give a precise statement of this result, we introduce some notation and recall some results of \cite{BS07}. Throughout this article, we denote the $d$-dimensional integer torus of side-length $N$ by 
$$E=({\mathbb Z}/N{\mathbb Z})^d,$$
where the dimension $d \geq d_0$ is a sufficiently large integer (see~(\ref{eq:dim})). $E$ is equipped with the canonical graph structure, where any two vertices at Euclidean distance $1$ are linked by an edge. We write $P$, resp.~$P_x$, for $x \in E$, for the law on $E^{\mathbb N}$ endowed with the product $\sigma$-algebra $\mathcal F$, of the simple random walk on $E$ started with the uniform distribution, resp.~at $x$. We let $(X_n)_{n \geq 0}$ stand for the canonical process on $E^{\mathbb N}$. By $X_{[s,t]}$, we denote the set of sites visited by the walk between times $[s]$ and $[t]$:
\begin{align*}
X_{[s,t]} = \left\{ X_{[s]}, X_{[s]+1}, \ldots, X_{[t]} \right\}, \textrm{ for } s,t \geq 0.
\end{align*}
We use the notation $e_1, \ldots, e_d$ for the canonical basis of ${\mathbb R}^d$, and denote the segment of length $l \geq 0$ in the $e_i$-direction at $x \in E$ by
$$[x,x+le_i] = E \cap \left\{ x+ \lambda l e_i: \lambda \in [0,1] \right\},$$
where the addition is naturally understood as addition modulo $N$.
The authors of \cite{BS07} introduce a dimension-dependent constant $c_0 > 0$ (cf.~\cite{BS07}, (2.47)) and for any $\beta \in (0,1)$ define an event ${\mathcal G}_{\beta, t}$ for $t \geq 0$ (cf.~\cite{BS07}, (2.52) and Corollary 2.6 in~\cite{BS07}), on which there exists a unique component $O$ of $E \setminus X_{[0,t]}$ containing any segment in $E \setminus X_{[0,t]}$ of the form $\left[x, x+ [c_0 \log N] e_i \right]$, $i=1,\ldots,d$, and such that $O$ is at an $|.|_\infty$-distance of at most $N^\beta$ from any point in $E$. This unique component is referred to as the \emph{giant component}. As in \cite{BS07}, we consider dimensions $d \geq d_0$, with $d_0$ defined as the smallest integer $d_0 \geq 5$ such that 
\begin{align}
49 \left( \frac{2}{d} + \left(1-\frac{2}{d}\right) q(d-2) \right) < 1 \quad \textrm{for any } d \geq d_0, \label{eq:dim}
\end{align}
where $q(d)$ denotes the probability that the simple random walk on ${\mathbb Z}^d$ returns to its starting point. Note that $d_0$ is well-defined, since $q(d) \downarrow 0$ as $d \to \infty$ (see \cite{M56}, (5.4), for precise asymptotics of $q(d)$).
Among other properties of the vacant set, it is shown in \cite{BS07}, Corollary~4.6, that for any dimension $d \geq d_0$ and any $\beta, \gamma \in (0,1)$,
\begin{align}
\lim_N P \left[ {\mathcal G}_{\beta, uN^d} \cap \left\{ \frac{|O|}{N^d} \geq \gamma \right\} \right] = 1, \quad \textrm{for small $u>0$}. \label{eq:Ovol}
\end{align}
Our main result is:
\begin{theorem} \label{thm}
$(d \geq d_0)$
\newline
For any sufficiently small $u>0$, the vacant set left by the random walk on $({\mathbb Z}/N{\mathbb Z})^d$ up to time $uN^d$ contains some segment of length
\begin{align}
l= \left[c_1 \log N \right] \stackrel{\textup{\textrm{(def.)}}}{=} \left[ (300d \log(2d))^{-1} \log N \right], \label{def:l}
\end{align}
which does not belong to the giant component with probability tending to $1$ as $N \to \infty$. That is, for any $\beta \in (0,1)$,
\begin{align}
\lim_N P\left[ {\mathcal G}_{\beta, uN^d} \cap \left( \bigcup_{x \in E} \left\{ [x, x+le_1] \subseteq E \setminus (X_{[0,uN^d]} \cup O) \right\} \right) \right] =1, \textrm{ for small } u>0. \label{eq:thm}
\end{align}
\end{theorem}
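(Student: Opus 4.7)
The plan is to produce, with probability tending to $1$, an ``isolated'' vacant segment $S = [x, x + le_1]$ whose vacant connected component in $E \setminus X_{[0,uN^d]}$ equals $S$ itself. Because $c_1 < c_0$ (with $c_0$ the constant from \cite{BS07}), $|S| = l+1 < [c_0 \log N]$ for $N$ large, so the component of $S$ contains no segment of length $[c_0 \log N]$; on $\mathcal{G}_{\beta, uN^d}$, uniqueness of the giant forces this component to differ from $O$, yielding $S \cap O = \emptyset$ and hence $S \subseteq E \setminus (X_{[0,uN^d]} \cup O)$. Since (\ref{eq:Ovol}) gives $P[\mathcal{G}_{\beta, uN^d}] \to 1$ for small $u > 0$, the theorem reduces to producing such an $S$ with high probability.

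To that end, I would distribute $M \asymp (N/l)^d$ candidate segments $S_i = [y_i, y_i + le_1]$ on a regular sub-lattice of $E$ of spacing of order $l$, so that their graph-boundaries $\partial S_i$ (the set of vertices in $E \setminus S_i$ adjacent to $S_i$, of cardinality $2 + 2(d-1)(l+1)$) are pairwise disjoint. Set
\begin{align*}
F_i = \{ S_i \subseteq E \setminus X_{[0,uN^d]} \} \cap \{ \partial S_i \subseteq X_{[0,uN^d]} \}.
\end{align*}
On $F_i$, the vacant component of $S_i$ is exactly $S_i$, so the previous reduction applies; the task becomes to show $P[\bigcup_i F_i] \to 1$, which I would attack via the second-moment (Paley--Zygmund) method applied to $Z = \sum_i \mathbf{1}_{F_i}$.

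For the first moment, the target is $P[F_i] \geq N^{-\alpha}$ with $\alpha < d$. Factoring $P[F_i] = P[S_i \text{ vacant}] \cdot P[\partial S_i \text{ visited} \mid S_i \text{ vacant}]$, the vacancy probability is controlled via the random-interlacements / torus Green's-function estimates underlying \cite{BS07} and behaves like $\exp(-u \cdot \mathrm{cap}(S_i)) \geq N^{-O(u)}$, while I would bound the conditional factor below by roughly $(1 - e^{-cu})^{|\partial S_i|} = N^{-O(\log(1/u)/\log(2d))}$, using that conditioning on the local miss perturbs the walk only near $S_i$ so that hit events on $\partial S_i$ retain approximate positive association. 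The specific form $c_1 = (300 d \log(2d))^{-1}$ is calibrated so the total exponent stays below $d$, guaranteeing $\mathbb{E}[Z] = M \cdot P[F_i] \to \infty$ for small enough $u$. For the second moment I would partition $[0, uN^d]$ into intervals of length of order $N^2 \log N$ (the mixing time) and argue that across such intervals the walk is approximately uniformly distributed, so the behaviours of the walk inside the disjoint boundaries of well-separated $S_i, S_j$ decorrelate, yielding $\mathrm{Cov}(\mathbf{1}_{F_i}, \mathbf{1}_{F_j}) = o(1) \cdot P[F_i] P[F_j]$ uniformly in $i \neq j$, whence Paley--Zygmund gives $P[Z \geq 1] \to 1$.

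The main obstacle is the first-moment lower bound itself: because $\{S_i \text{ vacant}\}$ is decreasing and $\{\partial S_i \text{ visited}\}$ is increasing in the walk's range, no FKG-type inequality combines them directly, and a union-bound complement on the hit event is vacuous since $|\partial S_i| \gg 1$. I plan to handle this by conditioning on $\{S_i \text{ vacant}\}$ and describing the conditional walk as a sequence of excursions in a local box of side of order $l$ around $S_i$ that avoid $S_i$; by mixing, each excursion enters from an almost-uniform point on the box boundary, and local Green's-function estimates give a quantitative lower bound on the probability that it hits a prescribed vertex of $\partial S_i$ before leaving the box. A coupon-collector argument over the $\asymp u l^{d-2}$ excursions available in time $uN^d$ then supplies the required conditional constant. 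A secondary, still delicate point is making the mixing / decorrelation estimate sharp enough for the $\asymp M^2$ cross-covariance terms to sum to $o(\mathbb{E}[Z]^2)$, which will rely on the geometric spectral decay of the walk's transition kernel on the torus.
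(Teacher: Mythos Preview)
Your reduction to finding an isolated vacant segment is correct and matches the paper. The genuine gap is in the first-moment bound, and it is structural rather than technical: your approach cannot work for \emph{all} sufficiently small $u$, which is what the theorem asserts.

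The event $F_i$ forces every vertex of $\partial S_i$ to lie in $X_{[0,uN^d]}$. A single vertex is visited by time $uN^d$ with probability $1-e^{-cu}\asymp u$ for small $u$, and even granting optimal positive correlation your own heuristic gives at best $P[\partial S_i \subseteq X_{[0,uN^d]}]$ of order $(cu)^{|\partial S_i|}$ with $|\partial S_i|\asymp 2d\,c_1\log N$. Hence $P[F_i]\le N^{-\alpha(u)}$ with $\alpha(u)\asymp 2d\,c_1\,|\log(cu)|$, so $\alpha(u)\to\infty$ as $u\downarrow 0$. Since $M\asymp N^d$, your first moment $E[Z]=M\cdot P[F_i]$ goes to $0$, not $\infty$, once $u$ drops below the fixed threshold $u_1\asymp(2d)^{-150d}$; Paley--Zygmund then yields nothing. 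No amount of coupon-collector bookkeeping over the $\asymp u\,l^{d-2}$ excursions can rescue this: those excursions are simply too few, for small $u$, to cover all $\asymp d\,l$ boundary vertices. The calibration of $c_1$ you invoke is irrelevant here, because the obstruction is the $u$-dependence, not the $N$-dependence.

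The paper avoids this by decoupling the two requirements in \emph{time}. It first shows (Proposition~\ref{pr:1}) that already by the $u$-independent short time $a_1=[N^{1.9}]$ the walk has, with high probability, surrounded at least $[N^\nu]$ segments of length $l$: during each of $\asymp N^{0.56}$ sub-intervals of length $[N^{\beta_1}]$ the walk can be forced, via a direct path-counting bound $(2d)^{-3dl}=N^{-1/100}$ (this is where $c_1=(300d\log(2d))^{-1}$ enters), to wrap around a nearby vacant segment without entering it. It then shows (Proposition~\ref{pr:2}) that at least one of these $[N^\nu]$ segments remains vacant up to time $uN^d$, via a second-moment argument applied to the purely \emph{decreasing} events $\{H_{[x,x+le_1]}>D^x_{l^*(u)}\}$; the survival probability $e^{-cul}$ here \emph{improves} as $u\downarrow 0$, and the covariance bounds from \cite{BS07} apply cleanly because no increasing event is mixed in. A Markov-property splice at time $a_1$ then yields the theorem for every small $u$.

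A secondary but real issue: your proposed covariance estimate $\mathrm{Cov}(\mathbf 1_{F_i},\mathbf 1_{F_j})=o(1)\,P[F_i]P[F_j]$ is not available from the machinery of \cite{BS07}, which handles local functions that are monotone in the trajectory (via the excursion decomposition around $C(x)\subset\tilde C(x)$). Your $F_i$ is an intersection of an increasing and a decreasing event, and the sprinkling/decoupling arguments do not combine these for free.
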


We now comment on the strategy of the proof of Theorem~\ref{thm}. We show that for $l$ as in (\ref{def:l}), for some $\nu>0$ and $u>0$ chosen sufficiently small,
\begin{align}
&\textrm{the vacant set at time $\bigl[N^{2-\frac{1}{10}} \bigr]$ contains at least $[N^\nu]$ components consisting of a}\label{claim1} \\
&\textrm{single segment of length $l$ (cf.~Section~\ref{sec:profusion}),} \nonumber\\
&\textrm{with high probability some of these segments remain unvisited until time $[uN^d]$} \label{claim2} \\
&\textrm{(cf.~Section~\ref{sec:thm}).}  \nonumber
\end{align}
Note that these logarithmic components are distinct from the giant component with overwhelming probability in view of (\ref{eq:Ovol}).

\vspace{5pt}
Let us explain the main ideas in the proofs of the claims (\ref{claim1}) and (\ref{claim2}). The argument showing (\ref{claim1}) consists of two steps. The first step is Lemma~\ref{lem:d}, which proves that with high probability, at any two times until $\bigl[N^{2-\frac{1}{10}}\bigr]$ separated by at least $\bigl[N^{\frac{4}{3}}\bigr]$, the random walk is at distinct locations. Here, the fact that $d \geq 5$ plays an important role. 

In the second step, we partition the time interval $\bigl[0,\bigl[N^{2-\frac{1}{10}}\bigr] \bigr]$ into subintervals of length $\bigl[N^{\frac{4}{3}+ \frac{1}{100}} \bigr] > \bigl[N^{\frac{4}{3}}\bigr]$. We show in Lemma~\ref{lem:e} that with high probability, there are at least $[N^\nu]$ such subintervals during which the following phenomenon occurs: the random walk visits every point on the boundary of an unvisited segment of length $l$ without hitting the segment itself, and thereafter also does not visit the segment for a time longer than $\bigl[N^{\frac{4}{3}}\bigr]$. It then follows with the help of the previous Lemma~\ref{lem:d} that the random walk does not visit the surrounded segments at all. Similarly, the segments surrounded in the $[N^\nu]$ different subintervals are seen to be distinct, and claim~(\ref{claim1}) is shown (cf.~Lemma~\ref{lem:geo}). The proof of Lemma~\ref{lem:e} uses a result on the ubiquity of segments of logarithmic size in the vacant set from \cite{BS07}. From this ubiquity result, we know that for any $\beta>0$, with overwhelming probability, there is a segment of length $l$ in the vacant set left until the beginning of every considered subinterval (in fact even until $[uN^d]$ for small $u>0$) in the $N^\beta$-neighborhood of any point. Hence, to show Lemma~\ref{lem:e}, it essentially suffices to find a lower bound on the probability that for some $\beta>0$, the random walk surrounds, but does not visit, a fixed segment in the $N^\beta$-neighborhood of its starting point until time $\bigl[N^{\frac{4}{3} +\frac{1}{100}} /2 \bigr]$ and does not visit the same segment until time $\bigl[N^{\frac{4}{3} +\frac{1}{100}}\bigr] > \bigl[N^{\frac{4}{3} +\frac{1}{100}} /2 \bigr] + \bigl[N^{\frac{4}{3}} \bigr]$.   

\vspace{5pt}
The rough idea behind the proof of claim~(\ref{claim2}) is to use a lower bound on the probability that one fixed segment of length $l$ survives (i.e.~remains unvisited) for a time of at least $[uN^d]$. With estimates on hitting probabilities mentioned in Section~\ref{sec:pre}, it can be shown that this probability is at least $e^{- \textrm{const }ul}$. Since this is much larger than $\frac{1}{[N^\nu]}$ for $u>0$ sufficiently small, cf.~(\ref{def:l}), it should be expected that with high probability, at least one of the $[N^\nu]$ unvisited segments survives until time $[uN^d]$. This conclusion does not follow immediately, because of the dependence between the events that different segments survive. However, the desired conclusion does follow by an application of a technique, developed in \cite{BS07}, for bounding the variance of the total number of segments which survive. 

\vspace{5pt}
The article is organized as follows:

\vspace{5pt}
Section~\ref{sec:pre} contains some estimates on hitting probabilities and exit times recurrently used throughout this work. In Section~\ref{sec:profusion}, we prove claim~(\ref{claim1}). In Section~\ref{sec:survival}, we prove a crucial ingredient for the derivation of claim~(\ref{claim2}). In Section~\ref{sec:thm}, we prove (\ref{claim2}) and conclude that these two ingredients do yield Theorem~\ref{thm}.

\vspace{5pt}
Finally, we use the following convention concerning constants: Throughout the text, $c$ or $c'$ denote positive constants which only depend on the dimension $d$, with values changing from place to place. The numbered constants $c_0, c_1, c_2, c_3, c_4$ are fixed and refer to their first place of appearance in the text. 

\paragraph{Acknowledgments.} The author is grateful to Alain-Sol Sznitman for proposing the problem and for helpful advice.

\section{Some definitions and useful results} \label{sec:pre}

In this section, we introduce some more standard notation and some preliminary estimates on hitting probabilities and exit and return times to be frequently used later on. By $({\mathcal F}_n)_{n \geq 0}$ and $(\theta_n)_{n \geq 0}$ we denote the canonical filtration and shift operators on $E^{\mathbb N}$.
For any set $A \subseteq E$, we often consider the entrance time $H_A$ and the exit time $T_A$, defined as
\begin{align*}
H_A &= \inf \left\{n \geq 0: X_n \in A \right\}, \textrm{ and} \\
T_A &= \inf \left\{n \geq 0: X_n \notin A \right\}.
\end{align*}
For any set $B \subsetneq E$, we denote the Green function of the random walk killed when exiting $B$ as
\begin{align}
g^B(x,y) = E_x \left[ \sum_{n=0}^{\infty} \mathbf{1} \left\{X_n = y, n < T_{B} \right\} \right]. \label{def:g}
\end{align}
We write $|.|_\infty$ for the $l_\infty$-distance on $E$, $B(x,r)$ for the $|.|_\infty$-closed ball of radius $r>0$ centered at $x \in E$, and denote the induced mutual distance of subsets $A$, $B$ of $E$ with $$d(A,B) = \inf \left\{ |x-y|_\infty: x \in A, y \in B \right\}.$$ For any set $A \subseteq E$, the boundary $\partial A$ of $A$ is defined as the set of points in $E \setminus A$ having neighbors in $A$ and the number of points in $A$ is denoted by $|A|$. For sequences $a_N$ and $b_N$, we write $a_N \ll b_N$ to mean that $a_N / b_N$ tends to $0$ as $N$ tends to infinity.

Throughout the proof, we often use the following estimate on hitting probabilities:
\begin{lemma} \label{lem:g}
$(d \geq 1, A \subseteq B \subsetneq E, x \in B)$
\begin{align}
\frac{\sum_{y \in A} g^{B}(x,y)}{\sup \limits_{y \in A} \sum_{y' \in A} g^B (y,y')} \leq P_x \left[ H_A \leq T_{B} \right] \leq \frac{\sum_{y \in A} g^B(x,y)}{\inf \limits_{y \in A} \sum_{y' \in A} g^B (y,y')}. \label{eq:g}
\end{align}
\end{lemma}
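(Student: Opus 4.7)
The plan is to prove this by expressing both sides via the expected number of visits to $A$ before exiting $B$, and then invoking the strong Markov property at the entrance time $H_A$.

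First I would note that since $A \subseteq B$, the event $\{H_A \leq T_B\}$ coincides with $\{H_A < T_B\}$: on this event one has $X_{H_A} \in A \subseteq B$, which forces $T_B > H_A$. Then I would rewrite
\begin{align*}
\sum_{y \in A} g^B(x,y) = E_x\left[\sum_{n=0}^{T_B - 1} \mathbf{1}\{X_n \in A\}\right] = E_x\left[\mathbf{1}\{H_A < T_B\} \sum_{n=H_A}^{T_B - 1} \mathbf{1}\{X_n \in A\}\right],
\end{align*}
where the second equality holds because the indicator of $\{X_n \in A\}$ vanishes for $n < H_A$.

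Next I would apply the strong Markov property at the stopping time $H_A$, together with the identity $T_B = H_A + T_B \circ \theta_{H_A}$ valid on $\{H_A < T_B\}$. This gives
\begin{align*}
E_x\left[\mathbf{1}\{H_A < T_B\} \sum_{n=H_A}^{T_B - 1} \mathbf{1}\{X_n \in A\}\right] = E_x\left[\mathbf{1}\{H_A < T_B\}\, \Phi(X_{H_A})\right],
\end{align*}
where $\Phi(y) := E_y\bigl[\sum_{n=0}^{T_B - 1} \mathbf{1}\{X_n \in A\}\bigr] = \sum_{y' \in A} g^B(y,y')$ for $y \in A$. Bounding $\Phi(X_{H_A})$ between $\inf_{y \in A} \Phi(y)$ and $\sup_{y \in A} \Phi(y)$ on the event $\{H_A < T_B\}$ and pulling these constants out of the expectation yields
\begin{align*}
P_x[H_A < T_B] \cdot \inf_{y \in A}\sum_{y'\in A} g^B(y,y') \;\leq\; \sum_{y \in A} g^B(x,y) \;\leq\; P_x[H_A < T_B] \cdot \sup_{y \in A}\sum_{y'\in A} g^B(y,y'),
\end{align*}
which rearranges to the claimed two-sided bound.

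There is no serious obstacle here; the only things to be careful about are the identification of $\{H_A \leq T_B\}$ with $\{H_A < T_B\}$ (so that the strong Markov argument is well-posed and $X_{H_A}$ really lies in $A$), and the verification that the inner sum in the decomposition indeed counts the remaining visits to $A$ strictly before exiting $B$, so that the post-$H_A$ piece equals $\Phi(X_{H_A})$ under the strong Markov property. Note also that the denominators in the statement are positive, since for any $y \in A$ one has $g^B(y,y) \geq 1$, so the final division is legitimate.
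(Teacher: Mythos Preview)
Your proof is correct and is precisely the argument the paper has in mind: the paper's proof is the one-line instruction to apply the strong Markov property at $H_A$ to the identity $\sum_{y\in A} g^B(x,y) = E_x\bigl[\{H_A\le T_B\},\bigl(\sum_{y\in A} g^B(X_0,y)\bigr)\circ\theta_{H_A}\bigr]$, and you have simply written out the details (including the identification $\{H_A\le T_B\}=\{H_A<T_B\}$ and the bounding of $\Phi(X_{H_A})$ by its infimum and supremum).
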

\begin{proof}
Apply the strong Markov property at $H_A$ to
$$\sum_{y \in A} g^B(x,y) = E_x \left[ \left\{H_A \leq T_B\right\} , \left(\sum_{y \in A} g^B(X_0,y) \right) \circ \theta_{H_A} \right].$$
\end{proof}
Moreover, we use the following exit-time estimates: 
\begin{lemma} \label{lem:exit}
\emph{($1 \leq a, b < \frac{N}{2}, x \in E$)}
\begin{align}
&P_x \left[T_{B(0,a)} \geq b^2 \right] \leq ce^{-c' \left( \frac{b}{a} \right)^2 }, \label{eq:exit1} \\
&P_0 \left[T_{B(0,b)} \leq a^2 \right] \leq ce^{-c' \frac{b}{a}}. \label{eq:exit2}
\end{align}
\end{lemma}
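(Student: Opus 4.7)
Both inequalities are classical exit-time estimates for simple random walk, and since $a, b < N/2$ the balls $B(0,a)$ and $B(0,b)$ coincide, as subsets of $E$, with $|.|_\infty$-balls in ${\mathbb Z}^d$ provided we work on time scales smaller than the first torus wrap-around. My plan is therefore to lift the walk to ${\mathbb Z}^d$ and reduce both bounds to standard estimates for simple random walk on ${\mathbb Z}^d$.

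For (\ref{eq:exit1}), I would first establish the uniform bound $\sup_{x \in B(0,a)} E_x[T_{B(0,a)}] \leq C a^2$ for a constant $C$ depending only on $d$. Writing $(\tilde X_n)$ for the lift of the walk to ${\mathbb Z}^d$ started at a lift of $x$ and $f(y) = \sum_{i=1}^d y_i^2$, the process $f(\tilde X_n) - n$ is a martingale, and optional stopping at $T_{B(0,a)}$ together with $|\tilde X_{T_{B(0,a)}}|_\infty \leq a+1$ gives $E_x[T_{B(0,a)}] \leq d(a+1)^2 \leq C a^2$. By Markov's inequality this yields $P_x[T_{B(0,a)} \geq 2C a^2] \leq 1/2$, uniformly in $x \in B(0,a)$. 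Iterating at consecutive multiples of $\lceil 2Ca^2 \rceil$ via the strong Markov property gives $P_x[T_{B(0,a)} \geq k \lceil 2Ca^2 \rceil] \leq 2^{-k}$, and choosing $k$ of order $b^2/a^2$ produces the desired bound $ce^{-c'(b/a)^2}$, which is at least as strong as the stated bound. If $x \notin B(0,a)$ then $T_{B(0,a)} = 0$ and (\ref{eq:exit1}) is trivial.

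For (\ref{eq:exit2}), I would use a coordinate-wise Hoeffding/Azuma argument. The event $\{T_{B(0,b)} \leq a^2\}$ forces at least one coordinate of the lifted walk to satisfy $\max_{n \leq a^2} |\tilde X_n^i| \geq b$, and each coordinate process $(\tilde X_n^i)$ is a martingale with increments in $\{-1,0,+1\}$. Applying Doob's maximal inequality to the exponential submartingale $e^{\lambda \tilde X_n^i}$ (and to its sign-flipped analog) and optimizing over $\lambda>0$ yields the Gaussian bound $P_0[\max_{n \leq a^2} |\tilde X_n^i| \geq b] \leq 2 e^{-b^2/(2a^2)}$. A union bound over the $d$ coordinates then gives $c e^{-c'(b/a)^2}$, which is stronger than the claimed $c e^{-c'b/a}$: for $b \geq a$ one has $(b/a)^2 \geq b/a$, while for $b < a$ the right-hand side of (\ref{eq:exit2}) is bounded below by a positive constant that can be absorbed into $c$.

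The main issue is really bookkeeping rather than any substantive obstacle: one must verify that the lifted walk does not wrap around the torus during the relevant time window (which is automatic from $a, b < N/2$ together with the number of strong Markov iterations in (\ref{eq:exit1})), after which both bounds reduce to standard martingale estimates.
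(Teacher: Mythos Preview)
Your proof is correct but follows a different route from the paper's. For (\ref{eq:exit1}) the paper applies the exponential Chebychev inequality $P_x[T_{B(0,a)} \geq b^2] \leq e^{-\lambda(b/a)^2} E_x[\exp\{\lambda a^{-2} T_{B(0,a)}\}]$ and bounds the exponential moment by Kha\'sminskii's Lemma; you instead establish the first-moment bound $E_x[T_{B(0,a)}] \leq Ca^2$ via the quadratic martingale and then iterate Markov's inequality by the strong Markov property. For (\ref{eq:exit2}) the paper splits the trajectory into successive displacements at $|.|_\infty$-distance $a$, writes $T_{B(0,b)} \geq U_{[b/a]}$ where $U_k$ is the time of the $k$-th such displacement, and bounds $E_0[e^{-a^{-2}U_1}] \leq 1-c$ via the invariance principle; your coordinate-wise Azuma--Hoeffding argument is more elementary and in fact yields the sharper Gaussian tail $ce^{-c'(b/a)^2}$ rather than the stated exponential one. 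Both of your arguments are self-contained and avoid the external inputs (Kha\'sminskii, invariance principle) the paper invokes; the paper's approach, on the other hand, makes the exponential-moment structure more explicit, which can be convenient if one later needs such moments directly. Your remark about torus wrap-around is fine: since $a,b<N/2$, the balls embed isometrically in ${\mathbb Z}^d$ and the lifted walk coincides with the torus walk up to the relevant exit time, so no additional care is needed.
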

\begin{proof}
We may assume that $2a \leq b$, for otherwise there is nothing to prove. To show (\ref{eq:exit1}), one uses the Chebychev inequality with $\lambda > 0$ and obtains
\begin{align*}
P_x \left[T_{B(0,a)} \geq b^2 \right] \leq E_x \left[ \exp \left\{ \frac{\lambda}{a^2} T_{B(0,a)} \right\} \right] e^{-\lambda \left( \frac{b}{a} \right)^2}.
\end{align*}
By Kha\'sminskii's Lemma (see \cite{S99}, Lemma 1.1, p.~292, and also \cite{K59}), this last expectation is bounded from above by $2$ for a certain constant $\lambda > 0$, and (\ref{eq:exit1}) follows.
As for (\ref{eq:exit2}), we define the stopping times $(U_n)_{n \geq 1}$ as the times of successive displacements of the walk at distance $a$, i.e.
\begin{align*}
U_1 &= \inf \left\{n \geq 0 : |X_n - X_0|_\infty \geq a \right\}, \textrm{ and for $n \geq 2$, } \\
 U_n &= U_1 \circ \theta_{U_{n-1}} + U_{n-1}. 
\end{align*}
Since $b \geq \left[ \frac{b}{a} \right]a$, one has
$T_{B(0,b)} \geq U_{\left[ \frac{b}{a} \right]}$ $P_0$-a.s.,
hence by the Chebychev inequality and the strong Markov property applied inductively at the times $U_{\left[ \frac{b}{a} \right]-1 }, \ldots, U_{1}$,
\begin{align*}
P_0 \left[T_{B(0,b)} \leq a^2 \right] &\leq e E_0 \left[ \exp \left\{- \frac{1}{a^2} U_{\left[ \frac{b}{a} \right]} \right\} \right] \\
&\stackrel{\textrm{(Markov)}}{\leq} e \left( E_0 \left[ \exp \left\{-\frac{1}{a^2} U_1 \right\} \right] \right)^{\left[ \frac{b}{a} \right]}. 
\end{align*}
By the invariance principle, the last expectation is bounded from above by $1-c$ for some constant $c>0$, from which (\ref{eq:exit2}) follows.
\end{proof}

The following positive constants remain fixed throughout the article,
\begin{align}
\beta_0 = \frac{1}{3(d-2)} \quad < \quad \alpha_0 = \frac{4}{3} \quad < \quad \beta_1 = \frac{4}{3} + \frac{1}{100} \quad < \quad \alpha_1 = 2 - \frac{1}{10}, \label{def:p}
\end{align}
as do the quantities
\begin{align}
b_{0}=[N^{\beta_0}] \quad \ll \quad a_0 =[N^{\alpha_0}] \quad \ll \quad  b_{1} =[N^{\beta_1}] \quad \ll \quad a_1 = [N^{\alpha_1}]. \label{def:ss}
\end{align}
We are now ready to begin the proof of the two crucial claims (\ref{claim1}) and (\ref{claim2}), starting with (\ref{claim1}).

\section{Profusion of logarithmic components until time $a_1$} \label{sec:profusion}

In this section, we show the claim (\ref{claim1}). To this end, we define the ${\mathcal F}_{[t]}$-measurable random subset ${\mathcal J}_t$ of $E$ for $t \geq 0$, as the set of all $x \in E$ such that the segment $[x, x+ le_1]$ forms a component of the vacant set left until time $[t]$, where $l$ was defined in (\ref{def:l}):
\begin{align}
{\mathcal J}_{t} = \left\{ x \in E: X_{[0,t]} \supseteq \partial [x,x+le_1] \textrm{ and } X_{[0,t]} \cap [x,x+le_1] = \emptyset \right\}. \label{def:J}
\end{align}
We then show that for small $\nu>0$, at least $\bigl[ N^{\nu} \bigr]$ segments of length $l$ occur as components in the vacant set until time $a_1$ with overwhelming probability:
\begin{proposition} \label{pr:1}
$(d \geq 5, a_1 \textrm{ as in } (\ref{def:ss}), l \textrm{ as in } \textup{(\ref{def:l})})$
\newline
For small $\nu>0$,
\begin{align}
\lim_N P \left[ |{\mathcal J}_{a_1}| \geq [N^\nu] \right] = 1. \label{eq:pr1}
\end{align}
\end{proposition}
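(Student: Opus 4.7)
My plan is to follow the two-step strategy sketched in the introduction. The backbone is an estimate (call it Lemma~\ref{lem:d}) stating that with overwhelming probability, for every pair of times $s<t$ in $[0,a_1]$ with $t-s \geq a_0$ one has $X_s \neq X_t$. This is a direct first-moment computation using the torus transition kernel bound $p_n(x,y) \leq c\,n^{-d/2} + c N^{-d}$. For a uniformly chosen starting location, $\sum_{n \geq a_0} p_n(0,0) \leq c\,a_0^{1-d/2}$, so by a union bound over the $\leq a_1$ possible starting times the total bad probability is at most $a_1 \cdot a_0^{1-d/2} = O(N^{10/3 - 1/10 - 2d/3})$, which is $o(1)$ as soon as $d \geq 5$ by the choice of exponents in \eqref{def:p}.

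For the main argument, I would partition $[0,a_1]$ into $M = \lfloor a_1 / b_1 \rfloor \asymp N^{\alpha_1 - \beta_1}$ subintervals $I_k = [(k-1)b_1, k b_1)$, and define $Z_k \in \{0,1\}$ as the indicator of a \emph{surrounding event} in $I_k$: there exists a segment $S_k = [y, y + l e_1]$ contained in $E \setminus X_{[0,(k-1)b_1]}$ and lying in the $N^{\beta_0}$-neighborhood of $X_{(k-1)b_1}$, such that during the first half of $I_k$ the walk visits every point of $\partial S_k$ without touching $S_k$ itself, and during the second half it avoids $S_k$ entirely. By the ubiquity-of-segments result from \cite{BS07}, such a candidate $S_k$ exists with overwhelming probability at each time $(k-1)b_1$. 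Conditioning on $\mathcal F_{(k-1)b_1}$ and applying the Markov property reduces $P[Z_k=1 \mid \mathcal F_{(k-1)b_1}]$ to a quenched probability that can be bounded below using Lemma~\ref{lem:g} together with the exit-time estimates of Lemma~\ref{lem:exit}; this yields a polynomial lower bound $\geq N^{-\kappa}$ for some $\kappa < \alpha_1 - \beta_1$. Setting $Y = \sum_k Z_k$, this gives $E[Y] \geq c\, N^{\alpha_1 - \beta_1 - \kappa} \gg [N^\nu]$ for $\nu > 0$ small enough.

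The main obstacle is to upgrade this first-moment estimate to a concentration bound, since the $Z_k$'s are strongly correlated through the underlying walk. Following the variance technique of \cite{BS07}, for $j<k$ one conditions on $\mathcal F_{(k-1)b_1}$ and uses that $Z_k$ depends only on the walk after time $(k-1)b_1$; a decoupling estimate based on the torus local CLT (essentially, $X_{(k-1)b_1}$ is approximately uniform on the relevant scale, nearly independently of the past) then yields $\mathrm{Cov}(Z_j,Z_k) = o(E[Z_j]\,E[Z_k])$ uniformly in $j,k$. Summing gives $\mathrm{Var}(Y) = o(E[Y]^2)$, and Chebyshev's inequality provides $Y \geq [N^\nu]$ with overwhelming probability.

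Finally, on the intersection of the two good events, Lemma~\ref{lem:d} upgrades each successful surrounding into a persistent component of the vacant set at time $a_1$: were $S_k$ to be entered at some later time $t^* > k b_1$, the walk would first cross $\partial S_k$ at some point $y$ already visited during $I_k$'s first half at some time $\tau$, and the gap $t^* - \tau \geq b_1/2 > a_0$ would contradict Lemma~\ref{lem:d}. The identical time-gap argument forces the segments surrounded in distinct subintervals to be distinct, so $|\mathcal J_{a_1}| \geq Y \geq [N^\nu]$ with probability tending to $1$, proving~\eqref{eq:pr1}.
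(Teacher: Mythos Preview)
Your three-part decomposition (non-self-intersection at time gaps $\geq a_0$, many ``surrounding'' subintervals, and the combination argument) is exactly the paper's structure, and your first and last steps are fine --- the heat-kernel bound for Lemma~\ref{lem:d} is in fact simpler than the paper's route via exit times and Green functions, and your boundary-crossing argument for why surrounded segments persist and are distinct is the paper's Lemma~\ref{lem:geo}.

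The gap is in the concentration step for $Y=\sum_k Z_k$. Two problems. First, $Z_k$ does \emph{not} depend only on the walk after time $(k-1)b_1$: the requirement $S_k\subseteq E\setminus X_{[0,(k-1)b_1]}$ makes $Z_k$ genuinely $\mathcal F_{kb_1}$-measurable. Second, and more seriously, the covariance bound you sketch is not available here. The local CLT on the torus is irrelevant because $b_1\ll N^2$ (the walk has not mixed), and the variance technique of \cite{BS07} that you invoke concerns spatial decorrelation of survival indicators at well-separated sites via excursion decomposition; it does not say anything about temporal decorrelation of the $Z_k$'s. What you do get, after conditioning on $\mathcal F_{(k-1)b_1}$ and using the ubiquity event $\mathcal V_{c_1,\beta_0,(k-1)b_1}$, is a uniform \emph{lower} bound $P[Z_k=1\mid\mathcal F_{(k-1)b_1}]\geq p\sim N^{-\kappa}$. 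You have no matching upper bound, so $\mathrm{Cov}(Z_j,Z_k)$ could be as large as $E[Z_j](1-E[Z_k])$, which is not $o(E[Z_j]E[Z_k])$ and kills Chebyshev.

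The paper avoids second moments entirely. From the same conditional lower bound $P[\mathcal A_{i,E}\mid\mathcal F_{(i-1)b_1}]\geq p$ on the ubiquity event, one peels off the last index by Markov to get $P\bigl[\bigcap_{i\in I}\mathcal A_{i,E}^c\cap\mathcal V\bigr]\leq(1-p)\,P\bigl[\bigcap_{i\in I\setminus\{\max I\}}\mathcal A_{i,E}^c\cap\mathcal V\bigr]$, and iterating gives $(1-p)^{|I|}$. A crude union bound over the $\leq\binom{M}{[N^\nu]}\leq e^{cN^\nu\log N}$ choices of the complementary set then shows $P[|\mathcal I_E|<[N^\nu]]\to 0$ once $p\cdot M\gg N^\nu\log N$, which the lower bound $p\geq c\,b_0^{-(d-2)}N^{-1/100}$ (obtained exactly as you outline via Lemma~\ref{lem:g} and Lemma~\ref{lem:exit}) guarantees for small $\nu>0$. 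Replace your second-moment paragraph by this domination/peeling argument and the proof goes through.
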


\begin{proof}
The proof of Proposition~\ref{pr:1} will be split into Lemmas~\ref{lem:d}, \ref{lem:e} and \ref{lem:geo}, which we now state. Lemma~\ref{lem:d} asserts that when $d \geq 5$, on an event of probability tending to $1$ as $N$ tends to infinity, $X_I \cap X_J = \emptyset$, for all subintervals $I$, $J$ of $[0,a_1]$ with mutual distance at least $a_0$.
\begin{lemma} \label{lem:d}
$(d \geq 5)$
\newline
\begin{align}
\lim_N P \Biggl[ \bigcap_{n=0}^{a_1-a_0} \left\{X_{[0,n]} \cap  X_{[n+a_0,a_1]} = \emptyset \right\} \Biggr]=1. \label{eq:d}
\end{align}
\end{lemma}
We then consider the $[a_1/b_1]$ subintervals $[(i-1)b_1,ib_1]$, $i=1,\ldots [a_1/b_1]$, of the interval $[0,a_1]$, each of length $b_1$, larger than $a_0$, cf.~(\ref{def:ss}). By ${\mathcal A}_{i,S}$, $S \subseteq E$, we denote the event that, during the first half of the $i$-th time interval, the random walk produces a component consisting of a segment of length $l$ (cf.~(\ref{def:l})) at some point $x \in S$, and does not visit the same component until the end of the $i$-th time interval:
\begin{align}
{\mathcal A}_{i,S} = \bigcup_{x \in S} \bigl( & \left\{ X_{[(i-1)b_1,(i-1)b_1+b_1/2]} \supseteq \partial[x,x+ le_1] \right\} \cap  \label{def:Ai}\\
&\left\{ X_{[0,ib_1]} \cap [x,x+ le_1] = \emptyset \right\} \bigr) \in {\mathcal F}_{ib_1}, \nonumber
\end{align}
for $i=1, \ldots, [a_1/b_1]$. For $S \subseteq E$, the random subset ${\mathcal I}_S$ of $\left\{1, \ldots, [a_1/b_1] \right\}$ is then defined as the set of indices $i$ for which ${\mathcal A}_{i,S}$ occurs, i.e.
\begin{align}
{\mathcal I}_S = \left\{ i \in \{1, \ldots, [a_1/b_1] \}: {\mathcal A}_{i,S} \textrm{ occurs} \right\}. \label{def:I} 
\end{align}
The next lemma then asserts that at least $[N^\nu]$ of the events ${\mathcal A}_{i,E}$, $i=1, \ldots, [a_1/b_1]$, occur.
\begin{lemma} \label{lem:e}
$(d \geq 4)$
\newline
For small $\nu>0$,
\begin{align}
\lim_N P\left[ |\mathcal{I}_E| \geq [N^\nu] \right]=1. \label{eq:e}
\end{align}
\end{lemma}
Finally, Lemma~\ref{lem:geo} shows that Lemmas~\ref{lem:d} and \ref{lem:e} together do yield Proposition~\ref{pr:1}.
\begin{lemma}\label{lem:geo}
$(d \geq 2, \nu>0, N \geq c)$
\begin{align}
\left\{ |{\mathcal I}_E| \geq [N^\nu] \right\} \cap  \bigcap_{n=0}^{a_1-a_0} \left\{X_{[0,n]} \cap  X_{[n+a_0,a_1]} = \emptyset \right\} \quad \subseteq \quad \left\{|{\mathcal J}_{a_1}| \geq [N^\nu] \right\}. \label{eq:geo}
\end{align}
\end{lemma}

We now prove these three Lemmas.
\begin{proof}[Proof of Lemma~\ref{lem:d}.]
We start by observing that by the simple Markov property and translation invariance, the probability of the complement of the event in (\ref{eq:d}) is bounded by
\begin{align}
P \Biggl[ \bigcup_{\substack{n,m \in [0, a_1] \\ m\geq n+a_0}} \left\{X_n = X_m \right\} \Biggr] &\leq \sum_{n=0}^{a_1} P \Biggl[ \bigcup_{m \in [n+a_0,n+a_1]} \left\{X_n = X_m \right\} \Biggr] \label{eq:ld7} \\
&= (a_1+1) P_0 \left[ H_{\{0\}} \circ \theta_{a_0} + a_0 \leq a_1 \right]. \nonumber 
\end{align}
The remaining task is to find an upper bound on this last probability via the exit-time estimates (\ref{eq:exit1}) and (\ref{eq:exit2}). We put $a_* = \bigl[N^{\frac{\alpha_0}{2} -\frac{1}{100}} \bigr] = \bigl[N^{\frac{2}{3} -\frac{1}{100}} \bigr]$. Note that then $a_*^2 \ll a_0$ and $a_1 \ll N^2$. By the exit-time estimates (\ref{eq:exit1}) and (\ref{eq:exit2}), we can therefore assume that the random walk exits the ball $B(0,a_*)$ before time $a_0$, but remains in $B(0,\frac{N}{4})$ until time $a_1$. More precisely, one has
\begin{align}
&P_0 \left[H_{\{0\}} \circ \theta_{a_0} + a_0 \leq a_1 \right] \label{eq:ld1}\\
&\quad \leq P_0 \left[ \left\{ H_{\{0\}} \circ \theta_{a_0} + a_0 \leq a_1 \right\} \cap \left\{ T_{B(0,a_*)} \leq a_0 \right\} \cap \left\{ T_{B(0,\frac{N}{4})} > a_1 \right\} \right] \nonumber\\
&\qquad + P_0 \left[T_{B(0,a_*)} >  a_0 \right] + P_0 \left[T_{B(0,\frac{N}{4})} \leq a_1 \right] \nonumber\\
&\quad = P_1 + P_2 + P_3, \nonumber
\end{align}
where $P_1$, $P_2$ and $P_3$ is abbreviated notation for three terms in the previous line.
By the exit-time estimate (\ref{eq:exit1}) applied with $a=a_*$ and $b=\sqrt{a_0}$, one has
\begin{align}
P_2 = P_0 \left[T_{B(0,a_*)} >  a_0 \right] \leq ce^{-c' \frac{a_0}{a_*^2} } \leq ce^{-c'N^{\frac{1}{50}}}. \label{eq:ld2}
\end{align}
Moreover, the estimate (\ref{eq:exit2}) with $a=\sqrt{a_1}$ and $b=\frac{N}{4}$ implies that
\begin{align}
P_3 = P_0 \left[T_{B(0,\frac{N}{4})} \leq a_1 \right] \leq ce^{-c' \frac{N}{\sqrt{a_1}} } \leq ce^{-c'N^{\frac{1}{20}}}. \label{eq:ld2.1}
\end{align}
It thus remains to bound $P_1$. We obtain by the strong Markov property applied at time $T_{B(0,a_*)}$, that
\begin{align}
P_1 &\leq P_0 \left[ H_{\{0\}} \circ \theta_{T_{B(0,a_*)}} + T_{B(0,a_*)} < T_{B(0,\frac{N}{4})} \right]  \label{eq:ld3}\\
&\stackrel{\textrm{(Markov)}}{\leq} \sup_{x \in E: |x|_\infty = a_* +1} P_x \left[ H_{\{0\}} \leq T_{B(0,\frac{N}{4})} \right] \nonumber
\end{align}
The standard Green function estimate from \cite{lawler}, Theorem~1.5.4.~implies that for any $x \in E$ with $|x|_\infty =a_* +1$, 
\begin{align}
P_x \left[ H_{\{0\}} \leq T_{B \left(0, \frac{N}{4} \right)} \right] \stackrel{(\ref{def:g})}{\leq} g^{B \left(0, \frac{N}{4} \right)} (x,0) \leq ca_*^{-(d-2)} \leq c N^{-(d-2) \left(\frac{\alpha_0}{2} - \frac{1}{100} \right)}. \nonumber
\end{align}
Inserted into (\ref{eq:ld3}), this yields
\begin{align}
P_1 \leq c N^{-(d-2)\left(\frac{\alpha_0}{2} - \frac{1}{100}\right)}. \label{eq:ld4}
\end{align}
Substituting the bounds (\ref{eq:ld2}), (\ref{eq:ld2.1}) and (\ref{eq:ld4}) into (\ref{eq:ld1}), one then finds that
\begin{align*}
P_0 \left[H_{\{0\}} \circ \theta_{a_0} + a_0 \leq a_1 \right] \leq c N^{-(d-2)\left(\frac{\alpha_0}{2} - \frac{1}{100}\right)}. 
\end{align*}
Inserting this estimate into (\ref{eq:ld7}), one finally obtains
\begin{align}
P \Biggl[ \bigcup_{\substack{n,m \in [0, a_1] \\ m\geq n+a_0}} \left\{X_n = X_m \right\} \Biggr] \leq ca_1  N^{-(d-2)\left(\frac{\alpha_0}{2} - \frac{1}{100}\right)} \leq  c N^{\alpha_1 -(d-2)\left(\frac{\alpha_0}{2} - \frac{1}{100}\right)}. \label{eq:ld8}
\end{align}
Since $d-2 \geq 3$, we have
$$\alpha_1 -(d-2)\left(\frac{\alpha_0}{2} - \frac{1}{100}\right) \leq 2- \frac{1}{10} - 3 \left( \frac{2}{3} - \frac{1}{100} \right) = - \frac{7}{100} <0,$$
and the proof of Lemma~\ref{lem:d} is complete with (\ref{eq:ld8}).
\end{proof}

\begin{proof}[Proof of Lemma~\ref{lem:e}.]
The following result on the ubiquity of segments of logarithmic size from \cite{BS07} will be used: Define for any constants $K>0$, $0<\beta<1$ and time $t \geq 0$, the event
\begin{align}
{\mathcal V}_{K, \beta, t} = \Bigl\{ &\textrm{ for all $x \in E$, $1\leq j \leq d$, for some $0 \leq m < N^{\beta}$, } \label{def:V} \\
&X_{[0,t]} \cap \left\{x+ \left(m+\left[0, [K \log N] \right] \right) e_j \right\} = \emptyset \Bigr\}. \nonumber
\end{align}
Then for dimension $d \geq 4$ and some constant $c>0$, one has
\begin{align}
\limsup_{N} \frac{1}{N^c} \log P \left[ {\mathcal V}_{c_1, \beta_0, uN^d}^c \right] <0, \quad \textrm{for small $u>0$}, \label{eq:V}
\end{align}
see the end of the proof of Theorem~1.2 in \cite{BS07} and note the bounds (1.11), (1.49), (1.56) in \cite{BS07}. With this last estimate we will be able to assume that at the beginning of every time interval $[(i-1)b_1, ib_1]$, $i=1, \ldots, [a_1/b_1]$, there is an unvisited segment of length $l$ in the $b_0$-neighborhood of the current position of the random walk. This will reduce the proof of Lemma~\ref{lem:e} to the derivation of a lower bound on $P_0 \left[ {\mathcal A}_{1,\{x\}} \right]$ for an $x$ in the $b_0$-neighborhood of $0$.

We denote with $\mathbb I$ the set of indices, i.e.~${\mathbb I}~=~\left\{1, \ldots, [ a_1/b_1] \right\}.$ A rough counting argument yields the following bound on the probability of the complement of the event in (\ref{eq:e}):
\begin{align}
P \left[ |{\mathcal I}_E|<[N^\nu] \right] &\leq \sum_{\substack{I \subseteq {\mathbb I} \\ |I| \geq |{\mathbb I}| - [N^\nu]}} P \left[ {\mathcal I}_E^c \supseteq I \right] \leq e^{cN^\nu \log N} \sup_{\substack{I \subseteq {\mathbb I} \\ |I| \geq |{\mathbb I}| - N^\nu}} P \left[{\mathcal I}_E^c \supseteq I \right]. \label{eq:le1}
\end{align}
For any set $I$ considered in the last supremum, we label its elements in increasing order as $1 \leq i_1 < \ldots < i_{|I|}$. Note that the events ${\mathcal V}_{c_1,\beta_0,t}$ defined in (\ref{def:V}) decrease with $t$. Applying (\ref{eq:V}), one obtains that
\begin{align}
P \left[ {\mathcal I}_E^c \supseteq I \right] \leq P \left[ \left\{ {\mathcal I}_E^c \supseteq I \right\} \cap {\mathcal V}_{c_1,\beta_0,a_1} \right] + ce^{-N^{c'}}. \label{eq:le2}
\end{align}
Again with monotonicity of ${\mathcal V}_{c_1,\beta_0,t}$ in $t$, one finds
\begin{align}
P \left[ \left\{ {\mathcal I}_E^c \supseteq I \right\} \cap {\mathcal V}_{c_1,\beta_0,a_1} \right] \leq P \left[ \bigcap_{i \in I \setminus \{i_{|I|}\}} {\mathcal A}_{i,E}^c \cap {\mathcal V}_{c_1,\beta_0,(i_{|I|}-1)b_1} \cap {\mathcal A}_{i_{|I|},E}^c \right]. \label{eq:le3}
\end{align}
We now claim that for any event ${\mathcal B} \in {\mathcal F}_{(i-1)b_1}$, $i \in {\mathbb I}$, such that ${\mathcal B} \subseteq {\mathcal V}_{c_1,\beta_0,(i-1)b_1}$, we have 
\begin{align}
P\left[{\mathcal A}_{i,E} \cap {\mathcal B}\right] \geq cb_0^{-(d-2)} N^{-\frac{1}{100}} P \left[ {\mathcal B} \right], \quad \textrm{for } N \geq c'. \label{eq:le4}
\end{align}
Before proving (\ref{eq:le4}), we note that if one uses (\ref{eq:le4}) in (\ref{eq:le3}) with $i=i_{|I|}$ and ${\mathcal B} = \bigcap_{i \in I \setminus \{i_{|I|}\}} {\mathcal A}_{i,E}^c \cap {\mathcal V}_{c_1,\beta_0,(i_{|I|}-1)b_1} \in {\mathcal F}_{(i_{|I|}-1)b_1}$, one obtains for $N \geq c$,
\begin{align*}
P \left[ \bigcap_{i \in I} {\mathcal A}_{i,E}^c \cap {\mathcal V}_{c_1,\beta_0,a_1} \right] \leq P \left[ \bigcap_{i \in I \setminus \{i_{|I|}\}} {\mathcal A}_{i,E}^c \cap {\mathcal V}_{c_1,\beta_0,(i_{|I|}-1)b_1} \right] \left( 1- c'b_0^{-(d-2)} N^{-\frac{1}{100}} \right),
\end{align*}
and proceeding inductively, one has for $0<\nu<(\alpha_1 - \beta_1)/2$ (cf.~(\ref{def:p})) and $N \geq c$,
\begin{align}
P \left[ \left\{ {\mathcal I}_E^c \supseteq I \right\} \cap {\mathcal V}_{c_1,\beta_0,a_1} \right] &\leq \left( 1- c'b_0^{-(d-2)} N^{-\frac{1}{100}} \right)^{|I|} \label{eq:le5}\\
&\leq \exp \left\{-c' N^{-(d-2)\beta_0 - \frac{1}{100} + \alpha_1-\beta_1} \right\} \stackrel{(\ref{def:p})}{\leq} \exp\left\{-c' N^{\frac{1}{6}} \right\}. \nonumber
\end{align}
As a result, (\ref{eq:le1}), (\ref{eq:le2}) and (\ref{eq:le5}) together yield for $0< \nu <(\alpha_1 - \beta_1)/2$ and $N \geq c$,
\begin{align*}
P \left[ |{\mathcal I}_E|<[N^\nu] \right] \leq &\exp \left\{N^{\nu} \log N -c' N^{\frac{1}{6}} \right\} + c'' \exp \left\{ N^\nu \log N - N^{c'} \right\},
\end{align*}
hence (\ref{eq:e}). It therefore only remains to show (\ref{eq:le4}). To this end, we first find a suitable unvisited segment of length $l$ to be surrounded during the $i$-th time interval. We thus define the ${\mathcal F}_{(i-1)b_1}$-measurable random subsets $({\mathcal K}_S)_{S \subseteq E}$ of $E$ of points $x \in S \subseteq E$ such that the segment of length $l$ at site $X_{(i-1)b_1} +x$ is vacant at time $(i-1)b_1$:
\begin{align*}
{\mathcal K}_S = \left\{x \in S: X_{[0,(i-1)b_1]} \cap \left( X_{(i-1)b_1}+x+[0,le_1] \right) = \emptyset \right\}.
\end{align*}
For $N \geq c$, on the event ${\mathcal V}_{c_1, \beta_0, (i-1)b_1}$, for any $y \in E$ there is an integer $0 \leq m \leq b_0$ such that the segment $y + me_1 + [0,le_1]$ is contained in the vacant set left until time $(i-1)b_1$. This implies in particular that with $y = X_{(i-1)b_1}$ (and necessarily $m >0$):
$${\mathcal V}_{c_1, \beta_0,(i-1)b_1} \subseteq \left\{ {\mathcal K}_{[e_1,b_0 e_1]} \neq \emptyset \right\}.$$
Since the event $\mathcal B$ in (\ref{eq:le4}) is a subset of ${\mathcal V}_{c_1, \beta_0, (i-1)b_1}$, it follows that
\begin{align}
P\left[{\mathcal A}_{i,E} \cap {\mathcal B}\right] &= P \left[{\mathcal B} \cap \left\{ {\mathcal K}_{[e_1,b_0 e_1]} \neq \emptyset \right\} \cap {\mathcal A}_{i,E} \right]  \label{eq:le5_0}\\
&= \sum_{\substack{S \subseteq [e_1,b_0 e_1], \\ S \neq \emptyset}} P \left[ {\mathcal B} \cap  \{ {\mathcal K}_{[e_1,b_0 e_1]} = S \} \cap  {\mathcal A}_{i,E}  \right]. \nonumber
\end{align}
Observe that for any $S \subseteq [e_1,b_0 e_1]$, 
$\left\{ {\mathcal K}_{[e_1,b_0 e_1]} =S \right\} \cap \theta^{-1}_{(i-1)b_1} {\mathcal A}_{1,S}  \subseteq {\mathcal A}_{i,S} \subseteq {\mathcal A}_{i,E},$ 
so it follows from (\ref{eq:le5_0}) that
\begin{align}
P\left[{\mathcal A}_{i,E} \cap {\mathcal B}\right] \geq \sum_{\substack{S \subseteq [e_1,b_0 e_1], \\ S \neq \emptyset}} P \left[ {\mathcal B} \cap  \{ {\mathcal K}_{[e_1,b_0 e_1]} = S \} \cap \theta^{-1}_{(i-1)b_1} {\mathcal A}_{1,S} \right]. \nonumber
\end{align}
Note that ${\mathcal K}_{[e_1,b_0 e_1]}$ and $\mathcal B$ are both ${\mathcal F}_{(i-1)b_1}$-measurable. Applying the simple Markov property at time $(i-1)b_1$ to the probability in this last expression and using translation invariance, it follows that
\begin{align}
P\left[{\mathcal A}_{i,E} \cap {\mathcal B}\right]  \geq \inf_{\substack{S \subseteq [e_1,b_0 e_1] \\ S \neq \emptyset}} P_0 \left[ {\mathcal A}_{1,S} \right] P\left[ {\mathcal B} \right] \geq \inf_{x \in [e_1,b_0 e_1]} P_0 \left[ {\mathcal A}_{1,\{x\}} \right] P\left[ {\mathcal B} \right]. \label{eq:le5_1}
\end{align}
In the remainder of this proof, we find a lower bound on $\inf_{x \in [e_1,b_0 e_1]} P_0 \left[ {\mathcal A}_{1,\{x\}} \right]$ in three steps. First, for arbitrary $x \in [e_1,b_0e_1]$, we bound from below the probability that the random walk reaches the boundary $\partial[x,x+le_1]$ within time at most $b_1/4$. Next, we estimate the probability that the random walk, once it has reached $\partial[x,x+le_1]$, covers $\partial[x,x+le_1]$ in $[3dl] \ll b_1/4$ steps. And finally, we find a lower bound on the probability that the random walk starting from $\partial[x,x+le_1]$ does not visit the segment $[x,x+le_1]$ during a time interval of length $b_1$. With this program in mind, note that for $x \in [e_1,b_0 e_1]$ and $N \geq c'$, one has
\begin{align*}
{\mathcal A}_{1,\{x\}} \supseteq  &  \Bigl\{H_{\partial [x,x+l e_1]} \leq \frac{1}{4} b_1 \Bigr\} \cap \left\{ (X \circ \theta_{H_{\partial [x,x+l e_1]}})_{\left[0, [3dl] \right]} = \partial [x,x+le_1] \right\} \\
&\qquad \cap \left\{ (X \circ \theta_{H_{\partial [x,x+le_1]} + [3dl]})_{[0,b_1]} \cap [x,x+le_1] = \emptyset \right\}, \quad P_0 \textrm{-a.s.}
\end{align*}
By the strong Markov property, applied at time $H_{\partial [x,x+le_1]}+[3dl]$, then at time $H_{\partial [x,x+le_1]}$, and translation invariance, one can thus infer that
\begin{align}
&\inf_{x \in [e_1,b_0 e_1]} P_0 \left[ {\mathcal A}_{1,\{x\}} \right] \quad \geq \quad \inf_{x \in E: |x|_\infty \leq b_0} P_0 \biggl[ H_{\partial [x,x+le_1]} \leq \frac{1}{4} b_1 \biggr] \times \label{eq:le6}\\
& \inf_{y \in \partial [0,le_1]} P_y \left[ X_{\left[0, [3dl] \right]} = \partial [0,le_1] \right] \times \inf_{y \in \partial [0,le_1]} P_y \left[ X_{[0,b_1]} \cap [0,le_1] = \emptyset \right] \stackrel{\textrm{(def.)}}{=} L_1 L_2 L_3. \nonumber
\end{align}
We now bound each of the above factors from below. Beginning with $L_1$, we fix $x \in E$ such that $|x|_{\infty} \leq b_0$ and define $b_*=\bigl[N^{\frac{1}{2} \left(\beta_1 - \frac{1}{100} \right)} \bigr] = \bigl[ N^{\frac{2}{3}} \bigr]$ (so that $b_0 \ll b_*$ and $b_*^2 \ll b_1$). We then observe that
\begin{align*}
P_0 \biggl[ H_{\partial [x,x+le_1]} \leq \frac{1}{4} b_1 \biggr] &\geq P_0 \left[ H_{\partial [x,x+le_1]} \leq T_{B(0,b_*)} \right] - P_0 \biggl[ T_{B(0,b_*)} \geq \frac{1}{4} b_1 \biggr]. 
\end{align*}
With (\ref{eq:exit1}), where $a=b_*$ and $b= \sqrt{\frac{b_1}{4}}$, we infer with (\ref{def:p}) that
\begin{align}
P_0 \biggl[ H_{\partial [x,x+le_1]} \leq \frac{1}{4} b_1 \biggr] \geq P_0 \left[ H_{\partial [x,x+le_1]} \leq T_{B(0,b_*)} \right] - c\exp \left\{ -c' N^{\frac{1}{100}} \right\}. \label{eq:le7}
\end{align}
We then use the left-hand estimate of (\ref{eq:g}) to find that
\begin{align*}
P_0 \left[ H_{\partial [x,x+le_1]} \leq T_{B(0,b_*)} \right] \geq \frac{\sum_{y \in \partial[x,x+le_1]} g^{B(0,b_*)}(0,y)}{\sup \limits_{y \in \partial [x,x+le_1]} \sum_{y' \in \partial [x,x+le_1]} g^{B(0,b_*)}(y,y')}. 
\end{align*}
With the Green function estimate of \cite{lawler}, Proposition~1.5.9 (for the numerator) and transience of the simple random walk in dimension $d-1$ (for the denominator), the right-hand side is bounded from below by $clb_0^{-(d-2)}$. With (\ref{eq:le7}), this implies that for $N \geq c$,
\begin{align}
L_1 \geq c'lb_0^{-(d-2)}. \label{eq:le9}
\end{align}
The lower bound we need on $L_2$ in (\ref{eq:le6}) is straightforward: We simply calculate the probability that the random walk follows a suitable fixed path in $\partial [0,le_1]$, starting at $y \in \partial [0,le_1]$ and covering $\partial [0,le_1]$ in at most $d(2l+8) \leq 3dl$ steps (for $N \geq c'$). Such a path can for instance be found by considering the paths ${\mathcal P}_i$, $i=2,\ldots,d$, surrounding the segment $[0,le_1]$ in the $(e_1,e_i)$-hyperplane, i.e.
\begin{align*}
{\mathcal P}_i =& (-1 e_1+0 e_i, - 1e_1+1e_i,0e_1+1e_i, 1e_1+1e_i, \ldots, (l+1)e_1 +1e_i, \\ 
&\quad (l+1)e_1 +0e_i, (l+1)e_1-1e_i, le_1-1e_i, \ldots, -1e_1-1e_i,-1e_1+0e_i),
\end{align*}
$i=2,\ldots,d.$ The paths ${\mathcal P}_i$ visit only points in $\partial [0,le_1]$ and their concatenation forms a path starting at $-e_1$ and covering $\partial [0,le_1]$ in $(d-1)(2l+8)$ steps. Finally, any starting point $y \in \partial [0,le_1]$ is linked to $-e_1$ in $\leq 2l+8$ steps via one of the paths ${\mathcal P}_i$. Therefore, we have
\begin{align}
L_2 \geq \left( \frac{1}{2d} \right)^{3dl} = e^{-(3d \log 2d)l} \stackrel{(\ref{def:l})}{\geq} N^{-\frac{1}{100}}. \label{eq:le10}
\end{align}
For $L_3$ in (\ref{eq:le6}), we note that for any $y \in \partial [0,le_1]$,
\begin{align}
P_y \left[ X_{[0,b_1]} \cap [0,le_1] = \emptyset \right] &\geq P_y \left[ T_{B(0,\frac{N}{4})} < H_{[0,le_1]}, T_{B(0,\frac{N}{4})} > b_1 \right] \label{eq:le11}\\
&\geq  P_y \left[ T_{B(0,\frac{N}{4})} < H_{[0,le_1]} \right] - P_y \left[ T_{B(0,\frac{N}{4})} \leq b_1 \right]. \nonumber
\end{align}
Note that the $d-1$-dimensional projection of $X$ obtained by omitting the first coordinate is a $d-1$-dimensional random walk with a geometric delay of constant parameter. Hence, one finds that for $y \in \partial[0,le_1]$, 
\begin{align}
P_y \left[ T_{B(0,\frac{N}{4})} < H_{[0,le_1]} \right] \geq \frac{d-1}{d} \left(1-q(d-1) \right), \label{eq:tr}
\end{align}
where $q(.)$ is as below (\ref{eq:dim}) and we have used $(d-1)/d$ to bound from below the probability that the projected random walk, if starting from $0$, leaves $0$ in its first step. By translation invariance, for $N \geq c$, the second probability on the the right-hand side of (\ref{eq:le11}) is bounded from above by $P_0 \bigl[ T_{B(0,\frac{N}{8})} \leq b_1 \bigr] \leq \exp \bigl\{-cN^{\frac{1}{3}-\frac{1}{200}} \bigr\}$, with (\ref{eq:exit2}), where $a= \sqrt{b_1}$ and $b= \left[\frac{N}{8} \right]$, cf.~(\ref{def:p}). Hence, we find that
\begin{align}
L_3 \geq c. \label{eq:le12}
\end{align}
Inserting the lower bounds on $L_1$, $L_2$ and $L_3$ from (\ref{eq:le9}), (\ref{eq:le10}) and (\ref{eq:le12}) into (\ref{eq:le6}) and then using (\ref{eq:le5_1}), we have shown (\ref{eq:le4}) and therefore completed the proof of Lemma~\ref{lem:e}.
\end{proof}

\begin{proof}[Proof of Lemma~\ref{lem:geo}.]
We denote the events on the left-hand side of (\ref{eq:geo}) by $A$ and $B$, i.e.
\begin{align}
A= \left\{ |{\mathcal I}_E| \geq [N^\nu] \right\}, \quad B= \bigcap_{n=0}^{a_1-a_0} \left\{X_{[0,n]} \cap  X_{[n+a_0,a_1]} = \emptyset \right\}. \nonumber
\end{align}
We need to show that, if $A \cap B$ occurs, then we can find $[N^\nu]$ segments of length $l$ as components of the vacant set left until time $a_1$. Informally, the reasoning goes as follows: for any of the $[N^\nu]$ events ${\mathcal A}_{i,E}$ occurring on $A$, cf.~(\ref{def:Ai}), the random walk produces in the time interval $(i-1)b_1 + [0,b_1/2]$ a component of the vacant set consisting of a segment of length $l$ and this segment remains unvisited for a further duration of $[b_1/2]$, much larger than $a_0$, cf.~(\ref{def:ss}). However, when $B$ occurs, after a time interval of length $a_0$ has elapsed, the random walk does not revisit any point on the visited boundary of the segment appearing in any of the occurring events ${\mathcal A}_{i,E}$. It follows that the segments appearing in the $[N^\nu]$ different occurring events ${\mathcal A}_{i,E}$ are distinct, unvisited and have a completely visited boundary. 
More precisely, we fix any $N \geq c$ such that 
\begin{align}
a_0 \leq \frac{b_1}{2}, \label{eq:t1}
\end{align}
and assume that the events $A$ and $B$ both occur. We pick $1 \leq i_1 < i_2 < \ldots < i_{[N^\nu]} \leq [a_1/b_1]$ such that the events ${\mathcal A}_{i_j,E}$ occur, and denote one of the segments of the form $[x,x+l e_1]$ appearing in the definition of ${\mathcal A}_{i_j,E}$ by $S_j$, cf.~(\ref{def:Ai}). The proof will be complete once we have shown that 
$$\textrm{$X_{[0,a_1]} \supseteq \partial S_j$, $X_{[0,a_1]} \cap S_j = \emptyset$ and $S_j \neq S_{j'}$ for any $j, j' \in \{1, \ldots, [N^{\nu}] \}$, $j < j'$.}$$ 
That $X_{[0,a_1]} \supseteq \partial S_j$ follows directly from the occurrence of the event ${\mathcal A}_{i_j,E}$ on $A$, cf.~(\ref{def:Ai}). 
To see that $X_{[0,a_1]} \cap S_j = \emptyset$, note first that by definition of ${\mathcal A}_{i_j,E}$,
\begin{align}
X_{[0,i_j b_1]} \cap S_j = \emptyset. \label{eq:t2}
\end{align}
In particular, this implies that $X_{[i_j b_1,a_1]} \nsubseteq S_j$ and that for any $x \in S_j$, there is a point $x' \in \partial S_j$ such that $d\left(x,X_{[i_jb_1,a_1]}\right) \geq d\left(x',X_{[i_jb_1,a_1]}\right)$, hence
\begin{align}
d\left(S_j, X_{[i_jb_1,a_1]} \right) \geq d \left(\partial S_j, X_{[i_jb_1,a_1]}\right). \label{eq:t3}
\end{align}
Moreover, one has on ${\mathcal A}_{i_j,E}$ that $\partial S_j \subseteq X_{[0, i_j b_1 -b_1/2]}$, and by (\ref{eq:t1}), $X_{[0, i_j b_1 -b_1/2]} \subseteq X_{[0, i_j b_1 -a_0]}$. Since $B$ occurs, this yields 
\begin{align}
\partial S_j \cap X_{[i_jb_1,a_1]} = \emptyset, \label{eq:t4}
\end{align}
and hence by (\ref{eq:t3}), $S_j \cap X_{[i_jb_1,a_1]} = \emptyset$. With (\ref{eq:t2}) we deduce that $X_{[0,a_1]} \cap S_j = \emptyset$, as required. Finally, we need to show that $S_j \neq S_{j'}$ for $j<j'$. To this end, note that on ${\mathcal A}_{i_{j'},E}$, $X_{[i_jb_1,a_1]} \supseteq X_{[(i_{j'}-1)b_1,a_1]} \supseteq \partial S_{j'}$, and hence
\begin{align*}
d \left(\partial S_j, \partial S_{j'} \right) \geq d \left(\partial S_j, X_{[i_jb_1,a_1]} \right) \stackrel{(\ref{eq:t4})}{>} 0.
\end{align*}
Hence (\ref{eq:geo}) is proved and the proof of Lemma~\ref{lem:geo} is complete.
\end{proof}

The statement (\ref{eq:pr1}) is now a direct consequence of (\ref{eq:d}), (\ref{eq:e}) and (\ref{eq:geo}), so that the proof of Proposition~\ref{pr:1} is finished. 
\end{proof}

\section{Survival of a logarithmic segment} \label{sec:survival}

This section is devoted to the preparation of the second part of the proof of Theorem~\ref{thm}, that is claim (\ref{claim2}). We show that at least one of the $[N^{\nu}]$ isolated segments produced until time $a_1$ remains unvisited by the walk until time $uN^d$. As mentioned in the introduction, the strategy is to use a lower bound of $e^{-cul}$ on the probability that one fixed segment remains unvisited until a (random) time larger than $uN^d$. The desired statement (\ref{claim2}) would then be an easy consequence if the events $\left\{X_{[0,uN^d]} \cap [x,x+le_1] = \emptyset \right\}$ were independent for different $x \in E$, but this is obviously not the case. However, a technique developed in \cite{BS07} allows to bound the covariance between such events for sufficiently distant points $x$ and $x'$ and with $uN^d$ replaced by the random time $D^x_{l^*(u)}$. Here, $D^x_k$ is defined as the end of the $k$-th excursion in and out of concentric boxes of suitable size centered at $x \in E$, and $l^*(u)$ is chosen such that with high probability, $D^x_{l^*(u)} \geq uN^d$, see (\ref{def:rd}) and (\ref{rd1}) below. The variance bounds from \cite{BS07} and the above-mentioned estimates yield the desired claim in Proposition~\ref{pr:2}. In order to state this proposition, we introduce the integer-valued random variable $\Gamma^J_{[s,t]}$ for $s, t \geq 0$ and $J \subseteq E$, counting the number of sites $x$ in $J$ such that the segment $[x,x+le_1]$ is not visited by $X_{[s,t]}$, i.e.
\begin{align}
\Gamma^J_{[s,t]} = \sum_{x \in J} {\mathbf 1}_{\left\{X_{[s,t]} \cap [x,x+le_1] = \emptyset \right\}}. \label{def:Gamma}
\end{align}
The following proposition asserts that for $\nu >0$ and an arbitrary set $J$ of size at least $[N^\nu]$, when $u>0$ is chosen small enough, $\Gamma^J_{[0,uN^d]}$ is not zero with $P_0$-probability tending to $1$ as $N$ tends to infinity. Combined with the application of the Markov property at time $a_1$, it will play a crucial role in the proof of (\ref{claim2}), cf.~(\ref{eq:thm2}) below.
\begin{proposition} \label{pr:2}
$(d \geq 4, 0 < \nu < 1)$
\newline
For $l$ as in \textup{(\ref{def:l})},
\begin{align}
\lim_N \inf_{\substack{J \subseteq E \\ |J|\geq [N^{\nu}]}} P_0 \left[ \Gamma^J_{[0,uN^d]} \geq 1 \right] =1, \textrm{ for small } u>0. \label{eq:pr2}
\end{align}
\end{proposition}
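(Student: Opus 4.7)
The plan is to apply a second-moment (Paley--Zygmund) argument, but not directly with the deterministic time $uN^d$, since the single-segment survival events are too strongly correlated on that scale. Following the strategy sketched at the beginning of Section~\ref{sec:survival}, I would first replace $uN^d$ by the random time $D^x_{l^*(u)}$ at which the walk completes its $l^*(u)$-th excursion between two concentric $|.|_\infty$-balls around $x$: an inner ball of some radius $r_1$ comparable to $b_0$ and an outer ball of radius $r_2$ with $r_1 \ll r_2 \ll N$. Choosing $l^*(u)$ of order $u r_2^{d-2}$, Lemma~\ref{lem:exit} together with standard estimates on excursion durations should give $P_0[D^x_{l^*(u)} \geq uN^d] \to 1$ uniformly in $x$. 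Setting $A_x = \{X_{[0,D^x_{l^*(u)}]} \cap [x,x+le_1] = \emptyset\}$, it is then enough to show $P_0[\sum_{x \in J} \mathbf{1}_{A_x} \geq 1] \to 1$.

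By a trivial packing argument, any $J$ with $|J| \geq [N^\nu]$ contains a subset $J_0$ with $|J_0| \geq c N^\nu$ in which any two points are $|.|_\infty$-separated by at least $2 r_2$; this ensures that the outer boxes used to define $D^x_{l^*(u)}$ and $D^{x'}_{l^*(u)}$ are disjoint for distinct $x, x' \in J_0$. Writing $\tilde\Gamma = \sum_{x \in J_0} \mathbf{1}_{A_x}$, Paley--Zygmund reduces the task to proving $(E_0[\tilde\Gamma])^2 / E_0[\tilde\Gamma^2] \to 1$. For the first moment, the strong Markov property applied successively at $D^x_1, \ldots, D^x_{l^*(u)}$ expresses $P_0[A_x]$ as a product of $l^*(u)$ factors, each bounded below by $1 - c' l / r_1^{d-2}$ via the hitting-probability estimate (\ref{eq:g}) and the standard Green function bound of \cite{lawler}, Proposition~1.5.9. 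This yields $P_0[A_x] \geq \exp(-c'' u l)$, and since $l = [c_1 \log N]$, picking $u$ small enough gives $E_0[\tilde\Gamma] \geq N^{\nu - c'' c_1 u} \geq N^{\nu/2}$, which tends to infinity.

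The main obstacle, and precisely the reason for switching to excursion times, is the second-moment bound $E_0[\tilde\Gamma^2] = \sum_{x,x' \in J_0} P_0[A_x \cap A_{x'}]$. This is the setting in which the covariance-control technique developed in \cite{BS07}---used there to control the variance of the vacant-set volume---enters. For distinct $x, x' \in J_0$ with disjoint outer boxes, I would couple the excursions around $x$ with those around $x'$ so that they become effectively independent, paying only a multiplicative error $1 + o(1)$ coming from the probability that a single excursion around $x$ reaches the inner ball about $x'$, controlled once more by the Green function estimate of \cite{lawler}. This should yield $P_0[A_x \cap A_{x'}] \leq (1+o(1))\, P_0[A_x]\, P_0[A_{x'}]$ uniformly in $x, x' \in J_0$, hence $E_0[\tilde\Gamma^2] \leq (1+o(1)) (E_0[\tilde\Gamma])^2 + E_0[\tilde\Gamma]$. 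Since $E_0[\tilde\Gamma] \to \infty$, Paley--Zygmund then delivers $P_0[\tilde\Gamma \geq 1] \to 1$, and Proposition~\ref{pr:2} follows.
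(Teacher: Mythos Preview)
Your overall strategy --- replace $uN^d$ by the excursion time $D^x_{l^*(u)}$, bound the first moment via the strong Markov property at the successive $D^x_k$, and control correlations by the technique of \cite{BS07} --- is exactly the paper's. Two of your concrete choices, however, break the execution.

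The main gap is the ``trivial packing argument''. A greedy extraction of $2r_2$-separated points from $J$ removes at most $c r_2^d$ points of $J$ per selected point, so it yields only $|J_0| \geq c|J|/r_2^d$, not $|J_0|\geq cN^\nu$. With $r_1$ comparable to $b_0=[N^{\beta_0}]$ and $r_1\ll r_2$ you have $r_2^d \gg N^{d\beta_0}$, and since $d\beta_0 = d/(3(d-2)) > 1/3$ for every $d\geq 4$, the set $J_0$ is empty whenever $\nu\leq 1/3$. The Proposition is stated for all $0<\nu<1$, and in the application in Section~\ref{sec:thm} the value of $\nu$ coming from Proposition~\ref{pr:1} is necessarily below $(\alpha_1-\beta_1)/2<0.28$, so this is fatal as written. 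A secondary slip: $l^*(u)$ must scale like $u$ times the $(d-2)$-th power of the \emph{inner} radius, not $u r_2^{d-2}$; with your choice the product lower bound degrades to $\exp\bigl(-cul(r_2/r_1)^{d-2}\bigr)$, which is no longer $N^{-cu}$.

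The paper avoids both problems by taking the inner radius polylogarithmic, $L=[(\log N)^2]$, and the outer radius $r\leq [N^{\nu/d}]$, so that $l^*(u)=[c_2uL^{d-2}]$ (Lemma~\ref{lem:rd}) and $r^d\leq N^\nu$. It does not thin $J$ at all; instead it bounds $\mathrm{var}_{P_0}(\tilde\Gamma_u)$ over the full $J$, splitting into near-diagonal pairs (at most $c(N^\nu r^d+r^{2d})$ of them) and far pairs, for which the covariance bound $cuL^d/r$ from \cite{BS07} applies. Chebyshev and an optimization in $r$ then give $P_0[\tilde\Gamma_u=0]\to 0$. Your Paley--Zygmund route would also work once you switch to these scales and replace the incorrect packing claim by this direct variance bound.
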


\begin{proof}
Throughout the proof, we say that a statement applies ``for large $N$'' if the statement applies for all $N$ larger than a constant depending only on $d$ and $\nu$. The central part of the proof is an application of a technique for estimating the covariance of ``local functions'' of distant subsets of points in the torus, developed in \cite{BS07}. In order to apply the corresponding result from \cite{BS07}, we set
\begin{align}
L= \left[(\log N)^2 \right] \label{eq:p2_1}
\end{align}
and, for large $N$, consider any positive integer $r$ such that
\begin{align}
10L \leq r \leq \bigl[ N^{\frac{\nu}{d}} \bigr]. \label{eq:p2_2}
\end{align}
Note that $L$ and $r$ then satisfy (3.1) of \cite{BS07}. We then define the nested boxes
\begin{align}
C(x)=B(x,L) \textrm{, and } {\tilde C}(x) = B (x,r). \label{eq:p2_2_1}
\end{align}
Finally, we consider the stopping times $\left(R^x_k, D^x_k \right)_{k \geq 1}$, the successive returns to $C(x)$ and departures from ${\tilde C}(x)$, defined as in \cite{BS07}, (4.8), by
\begin{align}
R^x_1 &= H_{C(x)}, \quad D^x_1 = T_{{\tilde C}(x)} \circ \theta_{R^x_1} + R^x_1, \textrm{ and for } n \geq 2,\label{def:rd}\\
R^x_n &= R^x_1 \circ \theta_{D^x_{n-1}} + D^x_{n-1}, \quad D^x_n = D^x_1 \circ \theta_{D^x_{n-1}} + D^x_{n-1}, \nonumber
\end{align}
so that $0 \leq R_1 < D_1 < \ldots < R_k < D_k < \ldots$, $P$-a.s. The following estimate from \cite{BS07} on these returns and departures will be used:

\begin{lemma} \label{lem:rd}
$(d \geq 3, L= \left[(\log N)^2 \right], r \geq 10L, N \geq 10r)$ \newline
There is a constant $c_2>0$, such that for $u>0$, $x \in E$,
\begin{align}
&P_0 \left[ R^x_{l^*(u)} \leq uN^d \right] \leq cN^d e^{-c'uL^{d-2}}, \textrm{ with } l^*(u) = \left[c_2uL^{d-2} \right]. \label{rd1} 
\end{align}
\end{lemma}

\begin{proof}[Proof of Lemma \ref{lem:rd}.]
The statement is the same as (4.9) in \cite{BS07}, except that we have here replaced $P$ by $P_0$ and added an extra factor of $N^d$ on the right-hand side of (\ref{rd1}). It therefore suffices to note that
$P \left[ R^x_{l^*(u)} \leq uN^d \right] \geq \frac{1}{N^d} P_0 \left[ R^x_{l^*(u)} \leq uN^d \right].$
\end{proof}

We now control the complement of the event in (\ref{eq:pr2}). To this end, fix any $J \subseteq E$ such that $|J| = \bigl[N^{\nu} \bigr]$ and note that
\begin{align}
P_0 \left[ \Gamma^J_{[0,uN^d]} = 0 \right]  & \leq P_0 \left[ \left\{ \Gamma^J_{[0,uN^d]} = 0 \right\} \cap \left\{ D^x_{l^*(u)} \geq uN^d \textrm{ for all } x \in E \right\} \right] \label{eq:p2_3}\\
&\qquad + P_0 \left[ \textrm{for some } x \in E, R^x_{l^*(u)}<D^x_{l^*(u)}<uN^d \right],  \nonumber\\
& \stackrel{(\ref{rd1})}{\leq} P_0 \left[ {\tilde \Gamma}_{u}=0 \right] + N^c e^{-c'u (\log N)^{2(d-2)}}, \textrm{ where}\nonumber 
\end{align}
\begin{align}
{\tilde \Gamma}_u =  \sum_{x \in J} \mathbf{1}_{\left\{ H_{[x,x+le_1]} > D^x_{l^*(u)} \right\}} \stackrel{\textrm{(def.)}}{=} \sum_{x \in J} h(x), \label{eq:p2_4}
\end{align}
and $l^*(u)$ was defined in (\ref{rd1}). In order to bound the probability in (\ref{eq:p2_3}), we need an estimate on the variance of ${\tilde \Gamma}_u$. This estimate can be obtained by using the bound on the covariance of $h(x)$ and $h(y)$ for $x$ and $y$ sufficiently far apart, derived in \cite{BS07}. To this end, one first notes that
\begin{align*}
\textrm{var}_{P_0}\left({\tilde \Gamma}_u \right) &= \textrm{var}_{P_0} \left( \sum_{x \in J} h(x) \right) \leq c \left( N^\nu r^d + r^{2d} \right) + N^{2\nu} \sup_{\substack{x,y \in E \\ |x-y|_\infty \geq 2r+3 \\ x,y \notin {\tilde C}(0)}} \textrm{cov}_{P_0} \left(h(x), h(y) \right).
\end{align*}
In the proof of Proposition~4.2 in \cite{BS07}, the covariance in the last supremum is bounded from above by $cu \frac{L^d}{r}$ (cf.~\cite{BS07}, above (4.44)). Since $r^d \leq N^\nu$ (cf.~(\ref{eq:p2_2})), we therefore have
\begin{align}
\textrm{var}_{P_0} \left({\tilde \Gamma}_u \right) \leq c \left( r^{d} N^\nu + u \frac{N^{2\nu} L^d}{r} \right). \label{eq:p2_5}
\end{align}
Below, we will show that
\begin{align}
P_0 \left[ H_{[x,x+le_1]} > D^x_{l^*(u)} \right] \geq ce^{-c'ul}, \quad \textrm{when } 0 \notin [x,x+le_1]. \label{eq:p2_10}
\end{align}
Before we prove this claim, we show how to deduce Proposition~\ref{pr:2} from the above. It follows from (\ref{eq:p2_10}) that for large $N$,
\begin{align*}
E_0 \left[ {\tilde \Gamma}_u  \right] = \sum_{x \in J} P_0 \left[ H_{[x,x+le_1]} > D^x_{l^*(u)} \right] \geq c_3 N^\nu e^{-c_4 ul}.
\end{align*}
Hence for large $N$, one has
\begin{align}
P_0 \left[ {\tilde \Gamma}_u =0 \right] &\leq P_0 \left[ {\tilde \Gamma}_u < E_0 \left[ {\tilde \Gamma}_{u} \right] - \frac{c_3}{2} N^\nu e^{-c_4ul  } \right] \label{eq:p2_10_1}\\
&\leq c \textrm{ var}_{P_0} ({\tilde \Gamma}_u) N^{-2\nu} e^{cul } \stackrel{(\ref{eq:p2_5})}{\leq} c \left( \frac{r^{d}}{N^{\nu}} + u \frac{L^d}{r} \right) e^{cul }. \nonumber
\end{align}
We now choose $r= \left[ \left(L^d N^{\nu} \right)^{\frac{1}{d+1}} \right]$, so that with (\ref{eq:p2_1}) one has $$cr \leq (\log N)^{\frac{2d}{d+1}} N^{\frac{\nu}{d+1}} \leq c'r$$ and $r$ satisfies (\ref{eq:p2_2}) for large $N$. Inserting these choices of $r$, $L$ and $l$ from (\ref{def:l}) into the estimate (\ref{eq:p2_10_1}), one obtains
\begin{align*}
P_0 \left[ {\tilde \Gamma}_u =0 \right] \leq c(1+u) (\log N)^{c} N^{- \frac{\nu}{d+1} + cu}.
\end{align*}
For $u>0$ chosen sufficiently small, the right-hand side tends to $0$ as $N \to \infty$. With (\ref{eq:p2_3}) and monotonicity of $\Gamma^J_.$ in $J$, this proves (\ref{eq:pr2}). There only remains to show (\ref{eq:p2_10}). 

First, the strong Markov property applied at time $T_{C(x)}$ yields that 
\begin{align}
P_0 \left[H_{[x,x+le_1]} > D^x_{l^*(u)} \right] &\geq P_0 \left[H_{[x,x+le_1]} > D^x_{l^*(u)}, T_{C(x)} < H_{[x,x+le_1]} \right] \nonumber\\
&\geq P_0 \left[T_{C(x)} < H_{[x,x+le_1]} \right] \inf_{y \notin C(x)} P_y \left[H_{[x,x+le_1]} > D^x_{l^*(u)} \right]. \nonumber
\end{align}
For $x$ such that $0 \notin [x,x+le_1]$, transience of simple random walk in dimension $d-1$ implies that $P_0 \left[T_{C(x)} < H_{[x,x+le_1]} \right] \geq c > 0$, and hence,
\begin{align}
P_0 \left[H_{[x,x+le_1]} > D^x_{l^*(u)} \right] &\geq c \inf_{y \notin C(x)} P_y \left[H_{[x,x+le_1]} > D^x_{l^*(u)} \right]. \label{eq:p2_12}
\end{align}
The application of the strong Markov property at the times $R^x_{l^*(u)}, R^x_{l^*(u)-1}, \ldots, R^x_{1}$ then yields 
\begin{align}
\inf_{y \notin C(x)} P_y \left[H_{[x,x+le_1]} > D^x_{l^*(u)} \right] \geq \left( \inf_{y \in \partial (C(x)^c)} P_y \left[H_{[x,x+le_1]} > D^x_{1} \right] \right)^{l^*(u)}. \label{eq:p2_13}
\end{align}
From the right-hand estimate of (\ref{eq:g}) on the hitting probability with $A=[x,x+le_1]$ and $B={\tilde C}(x)$ and the trivial lower bound of $1$ for the denominator of the right-hand side, one obtains that
\begin{align*}
\sup_{y \in \partial (C(x)^c)} P_y \left[H_{[x,x+le_1]} \leq D^x_{1} \right] \leq \sup_{y \in \partial (C(x)^c)} \sum_{z \in [x,x+le_1]} g^{{\tilde C}(x)}(y,z) \leq clL^{-(d-2)}, 
\end{align*}
with the Green function estimate from \cite{lawler}, Theorem~1.5.4 in the last step. Inserting this bound into (\ref{eq:p2_13}), one deduces that
\begin{align*}
\inf_{y \notin C(x)} P_y \left[H_{[x,x+le_1]} > D^x_{l^*(u)} \right] \geq \left(1- clL^{-(d-2)} \right)^{l^*(u)} \geq e^{-c'l L^{-(d-2)}l^*(u)} \geq e^{-c''ul}.
\end{align*}
With (\ref{eq:p2_12}), this shows (\ref{eq:p2_10}) and thus completes the proof of Proposition~\ref{pr:2}.
\end{proof}

\section{Proof of the main result} \label{sec:thm}

Finally, we combine the results of the two previous sections to deduce Theorem~\ref{thm} as a corollary of Propositions~\ref{pr:1} and \ref{pr:2}. 

\begin{proof}[Proof of Theorem~\ref{thm}.]
Note that if the giant component $O$ has macroscopic volume, then any component consisting only of a segment of length $l$ must be distinct from $O$. In other words, one has for $N \geq c$, cf.~(\ref{def:J}),
\begin{align*}
&{\mathcal G}_{\beta, uN^d} \cap \bigcup_{x \in E} \left\{ [x, x+le_1] \subseteq E \setminus (X_{[0,uN^d]} \cup O) \right\} \supseteq {\mathcal G}_{\beta, uN^d} \cap \left\{ \frac{|O|}{N^d} \geq \frac{1}{2} \right\} \cap \left\{ {\mathcal J}_{uN^d} \neq \emptyset \right\}.
\end{align*}
In view of (\ref{eq:Ovol}), it hence suffices to show that
\begin{align}
\lim_N P \left[ {\mathcal J}_{uN^d} \neq \emptyset \right] =1, \textrm{ for small } u>0. \label{eq:thm1}
\end{align}
However, the event in (\ref{eq:thm1}) occurs as soon as there are at least $[N^\nu]$, $\nu>0$, segments of length $l$ as components in the vacant set at time $a_1$, at least one of which is not visited by the random walk until time $uN^d$. For any $\nu>0$ and large $N$ (depending on $\nu$), the probability in (\ref{eq:thm1}) is therefore bounded from below by (cf.~(\ref{def:Gamma}))
\begin{align*}
P \left[ \left\{ |{\mathcal J}_{a_1}|  \geq [N^\nu] \right\} \cap \left\{ \Gamma^{{\mathcal J}_{a_1}}_{[a_1,uN^d]} \geq 1 \right\} \right] = \sum_{\substack{J \subseteq E \\ |J| \geq [N^{\nu}]}} P \left[ \left\{ {\mathcal J}_{a_1} = J \right\}  \cap \left\{ \Gamma^{J}_{[a_1,uN^d]} \geq 1 \right\} \right].
\end{align*}
By the simple Markov property applied at time $a_1$ and translation invariance, one deduces that
\begin{align}
P \left[ {\mathcal J}_{uN^d} \neq \emptyset \right] &\geq \sum_{\substack{J \subseteq E \\ |J| \geq [N^{\nu}]}} P \left[ {\mathcal J}_{a_1} = J \right]  \inf_{\substack{J' \subseteq E \\ |J'| \geq [N^{\nu}]}} P_0 \left[ \Gamma^{J'}_{[0,uN^d]} \geq 1 \right] \label{eq:thm2}\\
& = P \left[ |{\mathcal J}_{a_1}| \geq [N^\nu] \right] \inf_{\substack{J \subseteq E \\ |J| \geq [N^{\nu}]}} P_0 \left[ \Gamma^J_{[0,uN^d]} \geq 1 \right]. \nonumber
\end{align}
For small $\nu>0$, this last quantity tends to $1$ as $N \to \infty$ if $u>0$ is chosen small enough, by (\ref{eq:pr1}) and (\ref{eq:pr2}). This completes the proof of (\ref{eq:thm1}) and hence of Theorem~\ref{thm}.
\end{proof}

\begin{remark}
$ $\newline
\textup{1) With only minor modifications, the proof presented in this work shows that for $u>0$ chosen sufficiently small, on an event of probability tending to $1$ as $N$ tends to infinity, the vacant set left until time $[uN^d]$ contains at least $[N^{c(u)}]$ segments of length $l$, for a constant $c(u)$ depending on $d$ and $u$. Indeed, the proof of Proposition~\ref{pr:2}, with obvious changes, shows that for an arbitrary set $J \subseteq E$ of size at least $[N^\nu]$, one has $\Gamma^J_{[0,uN^d]} \geq \frac{c_3}{2} N^\nu e^{-c_4 ul}$ with probability tending to $1$ as $N$ tends to infinity, if $u>0$ is chosen sufficiently small, and this result can be used in the above proof to show the claim just made.}
\vspace{12pt}

\noindent \textup{2) From results of \cite{BS07} and the present work, it follows that uniqueness of a connected component of $E \setminus X_{[0,uN^d]}$ containing segments of length $[c\log N]$ holds for a certain $c=c_0$ (cf.~(0.7) in \cite{BS07}) and fails for a certain $c=c_1$ with overwhelming probability, when $u>0$ is chosen sufficiently small. It is thus natural to consider the value
\begin{align*}
c_* &= \inf \{c>0: \textrm{for small } u>0, \lim_N P[{\mathcal O}_{c,u}] = 1\}, \textrm{ where} \\
{\mathcal O}_{c,u} &\stackrel{\textrm{(def.)}}{=} \bigl\{ E \setminus X_{[0,uN^d]} \textrm{ contains exactly one connected component} \\
&\qquad \qquad \textrm{containing segments of length $[c \log N]$} \bigr\}.
\end{align*}
The results in \cite{BS07} show in particular that $c_* < \infty$, and the present work shows that $c_* >0$, hence $c_*$ is non-degenerate for $d \geq d_0$. One may then ask if it is true that for arbitrary $0<c<c_*<c'$, $\lim_N P[{\mathcal O}_{c,u}] = 0$ and $\lim_N P[{\mathcal O}_{c',u}]=1$, when $u>0$ is chosen sufficiently small. In fact, using results from \cite{BS07}, one easily deals with the case $c'>c_*$. Indeed, on the event ${\mathcal V}_{c',1/2,uN^d}$ (defined in (\ref{def:V})), the events ${\mathcal O}_{c'',u}$ increase in $c'' \leq c'$, so that one has ${\mathcal V}_{c',1/2,uN^d} \cap {\mathcal O}_{c'',u} \subseteq {\mathcal O}_{c',u}$ for $c'' \leq c'$. Since $\lim_N P[{\mathcal V}_{c',1/2,uN^d}] =1$ for $u>0$ chosen small enough (cf.~(1.26) in \cite{BS07}), this implies that if $\lim_N P[{\mathcal O}_{c'',u}]=1$, then $\lim_N P[{\mathcal O}_{c',u}]=1$ for any $c'>c''$. As far as the value or the large-$d$-behavior of $c_*$ is concerned, only little follows from \cite{BS07} and this work. While the upper bound from \cite{BS07} (cf.~(2.47) in \cite{BS07}) behaves like $d (\log d)^{-1}$ for large $d$, our lower bound behaves like $(d \log d)^{-1}$ (cf.~(\ref{def:l})), which leaves much scope for improvement.
}

\vspace{12pt}
\noindent \textup{3) This work shows a lower bound on non-giant components of the vacant set. Apart from the fact that vacant segments outside the giant component cannot be longer than $[c_0 \log N]$, little is known about upper bounds on such components. Although (\ref{eq:Ovol}) does imply that the volume of a non-giant component of the vacant set is with overwhelming probability not larger than $(1-\gamma)N^d$ for arbitrary $\gamma \in (0,1)$, when $u>0$ is small enough, simulations indicate that the volume of such components is typically much smaller. Further related open questions are raised in \cite{BS07}.}
\end{remark}

\end{document}